\def\N{\mathbb N}
\def\A{\mathcal A}
\def\LL{\mathcal L}
\def\uu{\mathbf{u}}
\def\vv{\mathbf{v}}
\def\yy{\mathbf{y}}
\def\ff{\mathbf{f}}
\def\dd{\mathbf{d}}
\theoremstyle{definition}
\newtheorem{definition}{Definition}
\newtheorem{corollary}[definition]{Corollary}
\newtheorem{example}[definition]{Example}
\theoremstyle{plain}
\newtheorem{theorem}[definition]{Theorem}
\newtheorem{proposition}[definition]{Proposition}
\newtheorem{lemma}[definition]{Lemma}
\newtheorem{observation}[definition]{Observation}
\begin{document}
\begin{frontmatter}
\title{An upper bound on asymptotic repetitive threshold of balanced sequences via colouring of the Fibonacci sequence}

\author[cvut]{L\!'ubom\'ira Dvo\v r\'akov\'a}
\ead{lubomira.dvorakova@fjfi.cvut.cz}
\author[cvut]{Edita Pelantov\'a}
\ead{edita.pelantova@fjfi.cvut.cz}
\address[cvut]{ Czech Technical University in Prague, Czech Republic}

\begin{abstract}
We colour the Fibonacci sequence by suitable constant gap sequences to provide an upper bound on the asymptotic repetitive threshold of $d$-ary balanced sequences. The bound is attained for $d=2, 4$ and $8$ and we conjecture that it happens for infinitely many  even $d$'s. 
  
Our bound reveals an essential difference in behavior of the repetitive threshold and  the asymptotic repetitive threshold of balanced sequences. 
The repetitive threshold of $d$-ary  balanced sequences is known to be  at least  $1+\frac{1}{d-2}$ for each $d \geq 3$. In contrast, our bound implies that the asymptotic repetitive threshold of $d$-ary balanced sequences is at most $1+\frac{\tau^3}{2^{d-3}}$ for each $d\geq 2$, where $\tau$ is the golden mean.  
\end{abstract}

\begin{keyword}
balanced sequence \sep asymptotic critical exponent \sep asymptotic repetitive threshold \sep constant gap sequence \sep return word \sep bispecial factor \sep Fibonacci sequence

\MSC 68R15
\end{keyword}
\end{frontmatter}

\section{Introduction}  The Fibonacci and the Thue-Morse sequences are the most prominent sequences of combinatorics on words. They often exhibit an extremal behaviour with respect to a studied property.  The property we focus on  in this note is repetition of motives in infinite strings.     
 
The $n$-th power of a non-empty word $u$ is a repetition of $n$ copies of $u$ if $n$ is a positive integer. 
In other words, $u^n$ denotes the prefix of length $|u|n$ of the infinite periodic sequence $uuu\cdots = u^\omega$, where $|u|$ is the length of $u$. Besides integer powers we can define also rational powers: any prefix of length $k$ of $u^\omega$ can be written as $u^e$, where $e=\frac{k}{|u|}$. 
For instance, a Czech word $kabelka$ (handbag) can be written in this formalism  as  $(kabel)^{7/5}$.

The {\em critical exponent}  $E(\uu )$ of an infinite sequence $\uu$ is defined as
$$E(\uu) =\sup\{e \in \mathbb{Q}: \  u ^e \  \text{is a factor of   } \uu  \  \text{for a non-empty word} \  u\}\,.
$$
In 1912,  Axel Thue proved that among all binary sequences the sequence generated by infinite iteration of the rewriting rule:  $\tt a \mapsto \tt ab$ and $\tt b \mapsto \tt ba$, i.e.,  the infinite sequence having prefixes   
$$\tt a \mapsto \tt ab \mapsto \tt abba \mapsto  \tt abbabaab \mapsto \tt abbabaabbaababba \mapsto \cdots\,,
$$
has the smallest possible critical exponent equal to 2. The sequence is now called the Thue-Morse sequence.  The famous Dejean's theorem (stated by Dejean~\citep{Dej72} and proved step by step by many people \citep{Pan84c, Mou92, MoCu07, Car07, CuRa11, Rao11}) says that the critical exponent of any $d$-ary sequence $\uu$ is greater than or equal to $1+\frac{1}{d-1}$ and this bound is for $d=2$ and $d\geq 5$ the best possible.  If we use the notation from~\citep{CuRa11}, we can write $RT(d)= 1+\frac{1}{d-1}$ for $d=2$ and $d\geq 5$, where $RT(d)$ is called the {\em repetitive threshold} and
$$ RT(d) = \inf\{ E(\uu) : \uu \text{\ is a sequence over a $d$-ary alphabet} \}\,.
$$

\medskip
Here we focus on balanced sequences. Let us recall that a sequence over a finite alphabet is {\em balanced} if, for any two of
its factors $u$ and $v$ of the same length, the number of occurrences of each letter
in $u$ and $v$ differs by at most one. Binary balanced aperiodic sequences were introduced in ~\citep{MoHe40}, they are called Sturmian. The Fibonacci sequence generated by the rewriting rule
$\tt a\mapsto \tt ab$ and $\tt b\mapsto \tt a$, i.e., having  prefixes 
$$\tt a\mapsto \tt ab \mapsto \tt aba\mapsto \tt abaab \mapsto \tt abaababa\mapsto \tt abaababaabaab\mapsto \cdots,$$
is a balanced sequence with the smallest critical exponent among all binary balanced sequences \citep{CaLu2000}, its value is $E(\ff) = 2+\frac{1}{\tau}$, where $\tau = \frac12(1+\sqrt{5})$ denotes the golden mean.  Hubert \citep{Hubert00} showed that any balanced sequence over any alphabet  arises by the so-called colouring of a Sturmian sequence by two constant gap sequences. A method to compute the critical exponent of  a balanced sequence is provided in \citep{DDP21}. 
The {\em repetitive threshold of balanced sequences}, i.e.,
$$RTB(d) = \inf\{E(\vv): \vv \text{ is a balanced sequence over a } d \text{-ary alphabet}\}\,, $$
was studied by several  authors. Today,  the threshold value $RTB(d)$ is known for all even $d$ and for $d\leq 11$, see~\citep{RSV19, Bar20, BaSh19,  DvOpPeSh2022}. It is worth  mentioning that for $d=4$ (see \citep{RSV19}) and all even $d\geq  14$ (see \citep{DvOpPeSh2022}), the threshold is attained on $d$-ary balanced sequences which arise by colouring of morphic images of the Fibonacci sequence.         
\medskip

In this  paper we concentrate  on  the {\em asymptotic critical exponent} $E^*(\uu)$  which 
 is defined to be $+\infty$,  if $E(\uu) = +\infty$ and
$$E^*(\uu) =\limsup_{n\to \infty}\{e \in \mathbb{Q}: \  u ^e \  \text{is a factor of  } \uu  \  \text{for some }  u \ \text{of length} \  n  \}\,,$$
 otherwise. If we define the {\em asymptotic repetitive threshold} as $$RT^*(d) =\inf\{E^*(\uu): \uu \text{\ is a sequence over a $d$-ary alphabet}\}\,, $$
we have $RT^*(d) =1 \ \text{for all $d\geq 2$}\,,$ see~\citep{DOP2022}.
The situation is more interesting if we  restrict our consideration to the {\em asymptotic repetitive threshold of balanced sequences}, i.e.,
$$RTB^*(d) =\inf\{E^*(\vv): \vv \text{\ is a  balanced sequence over a $d$-ary alphabet}\}\,. $$
 This threshold is bounded from below by $1+\frac{1}{2^{d-2}}$ and the  precise values of  $RTB^*(d)$ are known   for $d\leq 10$,  see ~\citep{DOP2022}.

 \medskip
 
In this note, we colour the Fibonacci sequence  by suitable constant gap sequences to provide the upper bound  $RTB^*(d) < 1+ \frac{\tau^3}{2^{d-3}}$ for $d \geq 2$. This bound is  refined for even $d$'s. The refined  bound equals $RTB^*(d)$  for $d=2, 4$ and $8$. We conjecture that this bound is attained also for infinitely many even values $d$.

\section{Preliminaries}
\label{Section_Preliminaries}
An \textit{alphabet} $\A$ is a finite set of symbols called \textit{letters}.
A \textit{word} over $\A$ of \textit{length} $n$ is a string $u = u_0 u_1 \cdots u_{n-1}$, where $u_i \in \A$ for all $i \in \{0,1, \ldots, n-1\}$.
The length of $u$ is denoted by $|u|$. 
The set of all finite words over $\A$ together with the operation of concatenation forms a monoid, denoted $\A^*$.
Its neutral element is the \textit{empty word} $\varepsilon$.
If $u = xyz$ for some $x,y,z \in \A^*$, then $x$ is a \textit{prefix} of $u$, $z$ is a \textit{suffix} of $u$ and $y$ is a \textit{factor} of $u$.
To any word $u$ over $\A$ with cardinality $\#\A = d$,  we assign its \textit{Parikh vector} $\Psi(u) \in \N^{d}$ defined as $(\Psi(u))_a = |u|_a$ for $a \in \A$, where $|u|_a$ is the number of letters $a$ occurring in $u$.

A \textit{sequence} over $\A$ is an infinite string $\uu = u_0 u_1 u_2 \cdots$, where $u_i \in \A$ for all $i \in \N$. We always denote sequences by bold letters. 

A sequence $\uu$ is \textit{eventually periodic} if $\uu = wvvv \cdots = wv^\omega$ for some $w, v \in \A^*$ and $v \neq \varepsilon.$  It is \textit{periodic} if $w=\varepsilon$. In both cases, the number $|v|$ is a \textit{period} of $\uu$. 
If $\uu$ is not eventually periodic, then it is \textit{aperiodic}.
A \textit{factor} of $\uu = u_0 u_1 u_2 \cdots$ is a word $y$ such that $y = u_i u_{i+1} u_{i+2} \cdots u_{j-1}$ for some $i, j \in \N$, $i \leq j$. 
The number $i$ is called an \textit{occurrence} of the factor $y$ in $\uu$.
In particular, if $i = j$, the factor $y$ is the empty word $\varepsilon$ and any index $i$ is its occurrence.
If $i=0$, the factor $y$ is a \textit{prefix} of $\uu$. 
The \textit{language} $\mathcal{L}(\uu)$ of the sequence $\uu$ is the set of all its factors.

If each factor of $\uu$ has infinitely many occurrences in $\uu$, the sequence $\uu$ is \textit{recurrent}.   If  $i < j$ are two consecutive occurrences of $w$ in $\uu$, then  the word $u_i u_{i+1} \cdots u_{j-1}$ is a \textit{return word} to $w$ in $\uu$.  A recurrent sequence $\uu$ said to be \textit{uniformly recurrent} if each factor $w \in \mathcal{L}(\uu)$ has only  finitely many return words. 

Let  $ \{r_1, r_2, \ldots, r_k\}$  be the set of return words to a prefix $w$ of $\uu$. Then $\uu$ can be written as a concatenation $\uu = r_{d_0}r_{d_1}r_{d_2} \cdots$ of return words to $w$. The sequence $\dd_\uu(w) = d_0d_1d_2 \cdots$ over the alphabet of cardinality $k$ is called the  \textit{derived sequence} of $\uu$ to $w$.
The concept of derived sequences was introduced by Durand~\citep{Dur98}.




A~sequence $\uu\in \A^{\mathbb{N}}$ is \textit{balanced} if for every letter $a \in \A$ and every pair of factors $u,v \in {\mathcal L}(\uu)$ with $|u|=|v|$, we have $|u|_a-|v|_a\leq 1$.  Every recurrent balanced sequence over an alphabet of any size is uniformly recurrent (see \citep{DDP21}).  Aperiodic balanced sequences over a~binary alphabet are called \textit{Sturmian sequences}. 


A \textit{morphism} over $\A$ is a mapping $\psi: \A^* \to \A^*$ such that $\psi(uv) = \psi(u)\psi(v)$  for all $u, v \in \A^*$. Morphisms can be naturally extended to $\A^{\mathbb{N}}$ by setting
$\psi(u_0 u_1 u_2 \cdots) = \psi(u_0) \psi(u_1) \psi(u_2) \cdots\,$. A \textit{fixed point} of a morphism $\psi$ is a sequence $\uu$ such that $\psi(\uu) = \uu$.

The asymptotic critical exponent of a uniformly recurrent sequence can be computed from its bispecial factors and their return words. Let us recall that a factor $w$ of $\uu$ is \textit{bispecial} if $wa, wb\in \mathcal{L}(\uu)$ for at least two distinct letters $a,b \in \A$ and $cw, dw\in \mathcal{L}(\uu)$ for at least two distinct letters $c,d \in \A$.

\begin{theorem}[{\citep{DDP21}}]
\label{thm:FormulaForCR}
Let $\uu$ be a uniformly recurrent aperiodic sequence.
Let $(w_n)$ be a sequence of all bispecial factors of $\uu$, ordered by their length.
For every $n \in \N$, let $v_n$ be a shortest return word to $w_n$ in $\uu$.
Then
$$
E^*(\uu) = 1 + \limsup_{n \to \infty} \left\{ \frac{|w_n|}{|v_n|} \right\}.
$$
\end{theorem}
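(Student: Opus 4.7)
Let $L := \limsup_{n\to\infty} |w_n|/|v_n|$. The plan is to prove $E^*(\uu)\ge 1+L$ and $E^*(\uu)\le 1+L$ separately.

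For the lower bound I would use the standard observation that if $v$ is a return word to a factor $w$ then $vw$ is a factor of $\uu$ of period $|v|$, since $w$ occurs at positions $0$ and $|v|$ of $vw$; hence $vw$ is the prefix of $v^\omega$ of length $|v|+|w|$, i.e.\ $vw = v^{\,1+|w|/|v|}$. Applied to each pair $(w_n,v_n)$, this yields factors $v_n^{\,e_n}$ of $\uu$ with $e_n = 1+|w_n|/|v_n|$ and period $|v_n|$. To conclude that these factors actually contribute to the \emph{asymptotic} exponent, I would verify $|v_n|\to\infty$ by contradiction: otherwise some fixed word $v$ would be the shortest return word to infinitely many $w_n$'s, so $\uu$ would contain arbitrarily long factors of period $|v|$; by compactness and minimality of the orbit closure (a consequence of uniform recurrence) this would force $\uu$ itself to be periodic, contradicting aperiodicity.

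For the upper bound I would start from an arbitrary factor $u^e$ of $\uu$ with $e>1$ and set $w$ to be the prefix of $u^e$ of length $|u|(e-1)$. By the periodicity of $u^e$, $w$ is also its suffix, so $w$ has two occurrences in $\uu$ separated by distance $|u|$; in particular, the shortest return word to $w$ has length at most $|u|$. The key step is to enlarge $w$ to a bispecial factor $W$ without losing these distinguished occurrences. I would extend $w$ one letter at a time: whenever the current factor has a unique right extension $a\in\A$ in $\mathcal{L}(\uu)$, every occurrence of it is followed by $a$, so appending $a$ preserves all occurrence distances, and the symmetric statement applies on the left. The pair of occurrences at distance $|u|$ therefore persists throughout; upon termination I obtain a bispecial factor $W$ containing $w$ whose shortest return word $\tilde v$ satisfies $|\tilde v|\le |u|$, and hence
\[
\frac{|W|}{|\tilde v|}\;\ge\;\frac{|w|}{|u|}\;=\;e-1.
\]
To assemble the asymptotic bound, I would feed in a sequence of factors $u_k^{\,e_k}$ with $|u_k|\to\infty$ and $e_k\to E^*(\uu)$; the construction yields bispecial $W_k$ with $|W_k|\ge |u_k|(e_k-1)\to\infty$ and $|W_k|/|\tilde v_k|\ge e_k-1$, giving $L\ge E^*(\uu)-1$.

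The main technical obstacle is the termination of this one-letter extension process. Non-termination on, say, the right would produce a one-sided infinite word into which every occurrence of $w$ in $\uu$ extends uniquely; applied to the two distinguished occurrences, this forces the tail of $\uu$ beyond them to be $|u|$-periodic, and uniform recurrence then promotes this to genuine periodicity, contradicting aperiodicity. The very same orbit-closure/periodicity argument also underlies the claim that $|v_n|\to\infty$ in the lower bound step, so both halves of the proof rest on the same crucial interplay between aperiodicity and uniform recurrence.
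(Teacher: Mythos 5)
Your proposal is correct and follows what is essentially the standard (and the cited) route: the present paper states this theorem without proof, quoting \citep{DDP21}, and your two halves --- the lower bound via $v_nw_n=v_n^{\,1+|w_n|/|v_n|}$ combined with the observation that $|v_n|\to\infty$ (bounded shortest return words would give a fixed $v$ with $v^k\in\mathcal{L}(\uu)$ for all $k$, hence eventual periodicity by uniform recurrence), and the upper bound via taking the prefix $w$ of $u^e$ of length $|u|(e-1)$, noting it has two occurrences at distance $|u|$, and closing it up to a bispecial factor by forced one-letter extensions --- are exactly the ingredients of the proof in that reference. Only two small technicalities deserve explicit care: a left extension can lose an occurrence sitting at position $0$, which you avoid by choosing the distinguished pair of occurrences far from the beginning (possible by recurrence); and since left and right extensions may interleave, the cleanest way to rule out non-termination is to note that the ever-longer extended factors still carry two occurrences at distance $|u|$, hence have period $|u|$, so $\uu$ would contain arbitrarily long $|u|$-periodic factors and, by uniform recurrence, be eventually periodic, contradicting aperiodicity --- your "tail is $|u|$-periodic" argument is this same observation, just stated for the case where only right extensions persist.
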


\subsection{Fibonacci numbers}\label{sec:FibonacciNumbers}
\begin{definition}
Let $F_0=0, \ F_1=1$ and $F_{n+2}=F_{n+1}+F_{n}$ for each $n \in \mathbb N$. Then $F_n$ is called the {\em $n$-th Fibonacci number}.
\end{definition}

We list some properties of Fibonacci numbers we  exploit in proofs. 
Here $\tau$ denotes the golden mean, i.e., $\tau = \frac12 (\sqrt{5}+1)\doteq 1.618$  is the larger root of $x^2=x+1$.

\begin{enumerate}
    \item $F_{n+1}F_{n-1}-F_n^2=(-1)^n$ for each $n \in \mathbb N$;
    \item $\gcd(F_n, F_{n+1})=1$ for each $n \in \mathbb N$;
    \item $F_{n+1}-\tau F_n=\frac{(-1)^n}{\tau^n}$ for each $n \in \mathbb N$;
    \item $\lim_{n \to \infty}\frac{F_{n+1}}{F_n}=\tau$;
    \item $F_{n+2}-\tau F_{n+1}$ and $F_{n+1}-\tau F_n$ have distinct signs for each $n \in \mathbb N$ and $(|F_{n+1}-\tau F_n|)_{n=0}^{\infty}$ is a strictly decreasing sequence;
    \item $F_{m+1}F_{n+1}+F_m F_n=F_{m+n+1}$ for all $m,n \in \mathbb N$.
\end{enumerate}
The first, third and sixth statement may be proved by induction. The second statement follows from the first one. The fourth and fifth statement follows from the third one.

\subsection{Fibonacci sequence}
The Fibonacci sequence $\ff$ over $\{\tt a, \tt b\}$ is a fixed point of the morphism $\varphi$ defined by
$$\varphi(\tt a)=\tt ab, \ \varphi(\tt b)=\tt a\,,$$
i.e., 
$$\ff=\tt abaababaabaababaababaabaababaab\cdots$$
The Fibonacci sequence is the most prominent  Sturmian sequence.  The bispecial factors are exactly palindromic prefixes of $\ff$, see~\citep{DrJuPi2001}.  A result of   Vuillon~\citep{Vui01} implies  that each factor of $\ff$  has exactly two return words.  \begin{example} The occurrences of 
the palindromic prefix   $w=\tt aba$  in $\ff$ are $0, 3, 5, 8,11, \ldots$
Hence $f_0f_1f_2 = \tt aba$  and $f_3f_4 = \tt ab$
 are the two return words to $\tt aba$. Moreover,  
the factor $w=\tt aba$ is bispecial since  ${\tt a}w{\tt a}=\tt aabaa$,  ${\tt a}w{\tt b}=\tt aabab$ and  ${\tt b}w{\tt a}=\tt babaa$ occur in $\ff$.        
\end{example}

For our further study, we need to know the Parikh vectors of bispecial factors and their return words. Let us order bispecial factors of $\ff$ by length and denote $b_0=\varepsilon, b_1={\tt a}, b_2={\tt aba},\dots$.  As $b_n$ is a prefix of $\ff$, one of its return words is a~prefix of $\ff$, too.   
\begin{proposition}[{\citep{DvMePe20}}]\label{prop:returnWords}
Let $b_N$ be the $N$-th bispecial factor of $\ff$ and $r_N$, resp. $s_N$ its prefix, resp. non-prefix return word, then
$$\Psi(r_N)=\begin{pmatrix}
F_{N+1} \\ F_N
\end{pmatrix}; \ \Psi(s_N)=\begin{pmatrix}
F_{N} \\ F_{N-1}
\end{pmatrix}; \ \Psi(b_N)=\begin{pmatrix}
F_{N+2}-1 \\ F_{N+1}-1
\end{pmatrix}\,.$$
\end{proposition}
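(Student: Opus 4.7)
The plan is to prove all three Parikh-vector identities by a simultaneous induction on $N$, exploiting the self-similarity of $\ff$ under the Fibonacci morphism $\varphi$. The induction rests on two structural recurrences that I would establish first: (S1) $b_{N+1} = r_N b_N$; and (S2) $r_{N+1} = r_N s_N$ together with $s_{N+1} = r_N$. The base case $N=1$ is direct: reading off the initial letters of $\ff$ shows $b_1 = {\tt a}$, $r_1 = {\tt ab}$, $s_1 = {\tt a}$, and the three Parikh vectors $(1,0)^\top$, $(1,1)^\top$, $(1,0)^\top$ match the formulas using $F_0 = 0$, $F_1 = F_2 = 1$, $F_3 = 2$.

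Granting (S1) and (S2), the inductive step reduces to a short linear-algebra computation driven by the Fibonacci recurrence. From (S2) and the induction hypothesis one gets $\Psi(r_{N+1}) = \Psi(r_N) + \Psi(s_N)$ and $\Psi(s_{N+1}) = \Psi(r_N)$, which immediately yield the claimed column vectors via $F_{N+1} + F_N = F_{N+2}$ and $F_N + F_{N-1} = F_{N+1}$. Iterating (S1) telescopes to $b_N = r_{N-1} r_{N-2} \cdots r_0$, so
$$\Psi(b_N) \;=\; \sum_{k=0}^{N-1} \Psi(r_k) \;=\; \begin{pmatrix} F_1 + F_2 + \cdots + F_N \\ F_0 + F_1 + \cdots + F_{N-1} \end{pmatrix} \;=\; \begin{pmatrix} F_{N+2} - 1 \\ F_{N+1} - 1 \end{pmatrix}$$
by the standard identity $\sum_{k=1}^{N} F_k = F_{N+2} - 1$.

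The main obstacle is establishing (S1) and (S2). My strategy is to prove by a parallel induction the stronger claim that $r_N = \varphi^N({\tt a})$ and $s_N = \varphi^N({\tt b})$, from which (S2) follows instantly from $\varphi({\tt a}) = {\tt ab}$ and $\varphi({\tt b}) = {\tt a}$. This $\varphi$-equivariance rests on two standard features of the Fibonacci morphism: it is injective on $\mathcal{L}(\ff)$ and admits a desubstitution, and it maps the $N$-th bispecial factor to essentially the $(N+1)$-st up to a short parity-dependent tail; together these guarantee that $\varphi$ transports the return-word decomposition of $\ff$ at $b_N$ to the one at $b_{N+1}$. For (S1), I would use that $b_{N+1}$ is a palindromic prefix of $\ff$ longer than $b_N$ and therefore both begins and ends with $b_N$; by minimality, no further occurrence of $b_N$ can lie strictly between these two inside $b_{N+1}$ (otherwise there would be a bispecial factor strictly between $b_N$ and $b_{N+1}$), forcing the factorization $b_{N+1} = r_N b_N$. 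Carefully verifying this $\varphi$-invariance of bispecial factors and their return words is the genuine technical content; the remainder is bookkeeping with the Fibonacci identities listed in Section~\ref{sec:FibonacciNumbers}.
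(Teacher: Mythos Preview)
Your plan coincides with the paper's own sketch: the paper does not give a full proof either, but simply records the relations $r_{N+1}=\varphi(r_N)$, $s_{N+1}=\varphi(s_N)$, $b_{N+1}=\varphi(b_N){\tt a}$, which are exactly your $r_N=\varphi^N({\tt a})$, $s_N=\varphi^N({\tt b})$ together with an equivalent bispecial recurrence. The only cosmetic difference is that you treat $b_N$ via (S1) $b_{N+1}=r_Nb_N$ and a telescoping sum, whereas the paper's $b_{N+1}=\varphi(b_N){\tt a}$ yields $\Psi(b_N)$ directly through the incidence matrix $\left(\begin{smallmatrix}1&1\\1&0\end{smallmatrix}\right)$ of $\varphi$; note that (S1) in fact follows from the paper's relation by a one-line induction, so you need not lean on the somewhat shaky ``otherwise there would be a bispecial factor strictly between $b_N$ and $b_{N+1}$'' step.
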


The previous proposition is borrowed from \citep{DvMePe20}, however, it can be proved directly using the morphism $\varphi$ generating the Fibonacci sequence and the simple relations   
$$ r_{N+1} =\varphi(r_N),  \quad s_{N+1} =\varphi(s_N) \    \ \text{and } \ \ 
b_{N+1} = \varphi(b_N){\tt a}\,.
$$

It  will be important to recognize whether a vector is the Parikh vector of a~certain factor of $\ff$.\begin{lemma}[{\citep{DDP21}}] \label{lem_kl}
Let $\begin{pmatrix} k \\ \ell \end{pmatrix} \in \mathbb N^2$. Then $\begin{pmatrix} k \\ \ell \end{pmatrix}$ is the Parikh vector of a factor of $\ff$ if and only if $|k-\tau \ell|<\tau^2$.
\end{lemma}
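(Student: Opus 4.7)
The plan is to read off both directions from the mechanical (Sturmian) representation of $\ff$: the inequality $|k-\tau\ell|<\tau^2$ is precisely the standard Sturmian balance bound, rewritten in the $(|\cdot|_{\tt a},|\cdot|_{\tt b})$ coordinates.

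First, I would use the fact that $\ff$ is a Sturmian sequence in which the letter $\tt b$ has frequency $\alpha=1/\tau^2$, and that it admits a coding of the form $f_n=\lfloor(n+1)\alpha+\rho\rfloor-\lfloor n\alpha+\rho\rfloor$ for a suitable intercept $\rho$. For any factor $u=f_if_{i+1}\cdots f_{i+n-1}$ this yields
$$
|u|_{\tt b}=\lfloor(i+n)\alpha+\rho\rfloor-\lfloor i\alpha+\rho\rfloor = n\alpha+\{i\alpha+\rho\}-\{(i+n)\alpha+\rho\},
$$
which differs from $n\alpha$ by strictly less than $1$ in absolute value.

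Next I translate the bound. Setting $k=|u|_{\tt a}$, $\ell=|u|_{\tt b}$, $n=k+\ell$, and using $\tau^2-1=\tau$, one has
$$
\tau^2\ell-n=\tau^2\ell-k-\ell=\tau\ell-k,
$$
so $\bigl||u|_{\tt b}-n\alpha\bigr|<1$ is equivalent to $|k-\tau\ell|<\tau^2$. This is the ``only if'' direction. For the converse, take $(k,\ell)\in\mathbb{N}^2$ with $|k-\tau\ell|<\tau^2$ and put $n=k+\ell$; the same identity forces $\ell\in\{\lfloor n\alpha\rfloor,\lceil n\alpha\rceil\}$, and because $\alpha$ is irrational both integers are actually attained as $|u|_{\tt b}$ over the length-$n$ factors of $\ff$ (by density of $\{i\alpha+\rho\}$ on $[0,1)$), yielding a factor with Parikh vector $(k,\ell)$.

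The only genuinely non-trivial input is the strict balance $\bigl||u|_{\tt b}-n\alpha\bigr|<1$ together with the realisation of both boundary values; this is classical for Sturmian sequences. A self-contained alternative would replace the mechanical representation by an induction on $N$, exploiting Proposition~\ref{prop:returnWords} and identity~(3) of Section~\ref{sec:FibonacciNumbers}: the return words $r_N,s_N$ satisfy $|k-\tau\ell|=1/\tau^N$ and $1/\tau^{N-1}$ respectively, so one may decompose an arbitrary factor into blocks of these return words plus short boundary pieces and track the signed contributions to $k-\tau\ell$ as a telescoping sum. The bookkeeping of those signs, keeping the cumulative deviation strictly inside $(-\tau^2,\tau^2)$, is the main obstacle on that alternative route.
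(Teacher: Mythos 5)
Your proof is correct. Note that the paper itself does not prove this lemma at all: it is imported verbatim from \citep{DDP21}, so there is no in-paper argument to compare against; your mechanical-word/rotation proof is the standard route (and essentially the one behind the cited result). Both directions check out: the identity $\tau^2\ell-n=\tau\ell-k$ with $\alpha=1/\tau^2$ correctly converts the statement into $\bigl|\,|u|_{\tt b}-n\alpha\,\bigr|<1$, and in the converse the irrationality of $\alpha$ guarantees exactly the two candidates $\lfloor n\alpha\rfloor,\lceil n\alpha\rceil$ for $\ell$ (the case $n=0$ being the empty factor), both of which are realised by the density of the orbit $\{i\alpha+\rho\}$, exactly as you argue.
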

The derived sequence to each bispecial factor of the Fibonacci sequence $\ff$ is again $\ff$, as follows from~\citep{Araujo}.
The asymptotic critical exponent of $\mathbf f$ is $2+\tau$, see~\citep{Mignosi1992}. It is the minimum among Sturmian sequences~\citep{CaLu2000}.

\section{Colouring of the Fibonacci sequence}

Our aim is to find balanced sequences with a small asymptotic critical exponent.    
To find a suitable  sequence  over an alphabet of even size, we colour   the Fibonacci sequence $\ff$  by two periodic sequences.

Let $\delta \in \mathbb N, \delta \geq 1$. We construct a periodic sequence $\yy_\delta$ with the period $2^{\delta-1}$ recursively.  
We start with $\yy_1={\tt 1}^{\omega}$. Then we put on every second position the letter $\tt 2$ and obtain $\yy_2=(\tt 12)^{\omega}$. Further, we put on every second position in $\yy_2$ the letter $\tt 3$ and obtain $\yy_3=(\tt 1323)^{\omega}$, etc. Obviously, the sequence $\yy_\delta$ is  a {\em constant gap sequence} since for each letter in $\{{\tt 1,2},\dots, \delta\}$ the distance between consecutive occurrences of the letter is constant (as the reader can easily check). 
By $\hat{\yy}_\delta$  we denote a  sequence over $\{{\tt \hat{1},\hat{2}},\dots, \hat{\delta}\}$ defined in the same way as $\yy_\delta$. 
\begin{definition}\label{def:colouring}
Let $\delta \in \mathbb N, \delta \geq 1$. 
The {\em colouring} $\vv_\delta$ of the Fibonacci sequence $\ff$ by constant gap sequences $\yy_\delta$ and $\hat{\yy}_\delta$ 
is the sequence obtained from $\ff$ by replacing the subsequence of all ${\tt a}$'s with $\yy_\delta$ and the subsequence of all ${\tt b}$'s with~$\hat{\yy}_\delta$.
\end{definition}

\begin{example} \label{ex:colouring}
The sequence $\vv_3$, i.e., the colouring of $\ff$ by constant gap sequences $(\tt 1323)^{\omega}$ and $(\tt \hat{1}\hat{3}\hat{2}\hat{3})^{\omega}$, looks as follows:
\begin{align*}
\mathbf f &= \mathtt{abaababaabaababaababaabaababaab}\cdots \\
\vv_3 &= \mathtt{ \textcolor{red}{1}\hat{1}\textcolor{red}{32}\hat{3}\textcolor{red}{3}\hat{2}\textcolor{red}{13}\hat{3}\textcolor{red}{23}\hat{1}\textcolor{red}{1}\hat{3}\textcolor{red}{32}\hat{2}\textcolor{red}{3}\hat{3}\textcolor{red}{13}\hat{1}\textcolor{red}{23}\hat{3}\textcolor{red}{1}\hat{2}\textcolor{red}{32}\hat{3}} \cdots
\end{align*}
\end{example}

The following properties of  $\vv_\delta$ are direct consequences of general theorems on balanced sequences stated in ~\citep{Hubert00, Graham} and ~\citep{DDP21}. 
\begin{proposition} The sequence $\vv_\delta$ is aperiodic,  balanced and uniformly recurrent.    
\end{proposition}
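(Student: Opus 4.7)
The plan is to verify the three properties in turn, each being a direct consequence of the general results cited in Section~\ref{Section_Preliminaries}.

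\emph{Aperiodicity} follows from a projection argument. Let $\pi$ denote the letter-to-letter map sending each unhatted letter $\tt i$ to $\tt a$ and each hatted $\hat{\tt i}$ to $\tt b$; by Definition~\ref{def:colouring} we have $\pi(\vv_\delta)=\ff$. Eventual periodicity of $\vv_\delta$ would transfer to $\ff$, contradicting the aperiodicity of the Fibonacci sequence. \emph{Balance} follows from Hubert's theorem~\citep{Hubert00}: any colouring of a Sturmian sequence by two constant gap sequences is balanced. Since $\ff$ is Sturmian and the inductive construction immediately makes $\yy_\delta$ and $\hat{\yy}_\delta$ constant gap, the hypothesis is met.

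\emph{Uniform recurrence} I would derive in two steps. By the theorem of~\citep{DDP21} recalled in Section~\ref{Section_Preliminaries}, a recurrent balanced sequence is automatically uniformly recurrent, so once balance is known it suffices to prove recurrence. Fix a factor $w$ of $\vv_\delta$ occurring at position $i$, set $T=2^{\delta-1}$, and let $(\alpha,\beta)\in(\mathbb{Z}/T)^2$ be the residues of the pair $\bigl(|f_0\cdots f_{i-1}|_{\tt a},\,|f_0\cdots f_{i-1}|_{\tt b}\bigr)$ modulo $T$. By the construction of $\vv_\delta$, an occurrence of $\pi(w)$ at a position $j$ in $\ff$ yields an occurrence of the specific coloured factor $w$ in $\vv_\delta$ exactly when the analogous residue pair at $j$ also equals $(\alpha,\beta)$.

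The main obstacle is thus the phase-matching claim that among the infinitely many occurrences of $\pi(w)$ in $\ff$ (supplied by the uniform recurrence of $\ff$), the prescribed residue pair $(\alpha,\beta)$ is attained infinitely often. I plan to establish this equidistribution statement either dynamically, by observing that the skew product of the Sturmian rotation coding $\ff$ with the finite group $(\mathbb{Z}/T)^2$ via the Parikh-count cocycle is minimal, or combinatorially, via Weyl equidistribution of $\lfloor i/\tau\rfloor\bmod T$ along the Beatty-type set of occurrences of $\pi(w)$; a concrete alternative is to use the Pisano periodicity of $(F_n,F_{n+1})\bmod T$ together with the self-similarity $\ff=\varphi(\ff)$ to exhibit arbitrarily long prefixes whose Parikh vectors are congruent modulo $T$ to a preassigned vector. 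Recurrence of $\vv_\delta$ follows, and then uniform recurrence by~\citep{DDP21}.
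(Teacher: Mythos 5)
Your aperiodicity and balance arguments are fine, and they coincide with what the paper actually does: the paper gives no proof of this proposition at all, but simply observes that all three properties are direct consequences of the general results of Hubert~\citep{Hubert00}, Graham~\citep{Graham} and \citep{DDP21} on colourings of Sturmian sequences by constant gap sequences (note only that the direction of Hubert's theorem you need is ``every such colouring is balanced'', which is the converse of the implication quoted in the introduction, though it is indeed contained in \citep{Hubert00}). The reduction of uniform recurrence to recurrence via the result of \citep{DDP21} is also the intended route.

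The genuine gap is in the one step you chose to prove rather than cite: recurrence of $\vv_\delta$. Your phase-matching reformulation is correct (an occurrence $j$ of $\pi(w)$ in $\ff$ gives an occurrence of the coloured word $w$ exactly when the Parikh vector of the prefix of $\ff$ of length $j$ is congruent to the prescribed pair modulo $H=2^{\delta-1}$), but none of your three proposed ways of realizing the prescribed residues infinitely often is actually carried out. In particular, ``observing'' that the skew product of the Sturmian system with $(\mathbb{Z}/T)^2$ via the Parikh cocycle is minimal is not an observation: a finite-group extension of a minimal rotation by a cocycle can fail to be minimal (the cocycle may be a coboundary or take values, up to cohomology, in a proper subgroup), so this needs a proof specific to the Fibonacci cocycle. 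Likewise, Weyl equidistribution of $\lfloor i/\tau\rfloor \bmod T$ must be established along the occurrence set of $\pi(w)$ (a Beatty-type subset), not along all integers, which is an additional argument. The Pisano-periodicity idea is the one that can most easily be made complete and elementary: using the return-word/derived-sequence structure of $\ff$, it suffices to exhibit infinitely many prefixes of $\ff$ whose Parikh vector is $\equiv(0,0) \bmod H$, which one gets, e.g., from the Zeckendorf-type factorization of prefixes into blocks $\varphi^{c-2}(\mathtt{a})$ with Parikh vectors $(F_{c-1},F_{c-2})$ and the periodicity of Fibonacci pairs modulo $H$. As written, however, your recurrence step is a plan with unverified key claims, so the proof is incomplete; either finish one of these routes or, as the paper does, invoke the cited general theorems for recurrence/uniform recurrence of colourings as well.
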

We will also exploit the ``discolouration map'' $\pi$ which replaces in $\vv_\delta$  all letters from $\{{\tt 1,2},\dots, \delta\}$ by $\mathtt{a}$ and all letters from $\{{\tt \hat{1}, \hat{2}},\dots, \hat{\delta}\}$ by $\mathtt{b}$. Consequently, $\pi(\vv_\delta )=\ff$ and $\pi(v) \in \mathcal{L}(\ff)$ for every $v \in \LL(\vv_\delta )$.

\section{Return words in the colouring of the Fibonacci sequence}
We will treat the sequences $\vv_\delta$ for each $\delta \in \mathbb N, \delta \geq 1$, simultaneously. For this purpose we denote $H=2^{\delta-1}$ the period of the two  constant gap sequences we use to colour $\ff$. 

Let us recall that the length  $|v|$ of a return word to a factor $w$ in $\vv_\delta$ equals the difference between two consecutive occurrences of $w$ in $\vv_\delta$. Furthermore, if an index $i$ is an occurrence of  $w$ in $\vv_\delta$, then $i$ is an occurrence of its ``discolouration'' $\pi(w)$ in $\ff$. Hence,  $\pi(v)$  is a concatenation of several (at least one) return words to $\pi(w)$ in $\ff$. Thus if $r$ and $s$ are the return words to $\pi(w)$ in $\ff$, then  there exist $ k,\ell \in \mathbb{N}, k+\ell\geq 1$, such that 
\begin{equation}\label{kl}
   |v| = |\pi(v)|\ \ \text{and}\ \   \Psi(\pi(v)) = k\Psi(r) + \ell\Psi(s)\,. 
   \end{equation}
If $\pi(w)$ is a bispecial factor of $\ff$, then one of the return words to $\pi(w)$ is a prefix of $\ff$. In the sequel, we always assume that $r$ is the prefix return word to $\pi(w)$ and $s$ is the non-prefix one.

Of course,  not all pairs $(k,\ell)$  correspond to a concatenation of $r$ and $s$ forming  $\pi(v)$. The possible combinations are given by factors of the derived sequence of $\ff$ to  $\pi(w)$, which is however again the Fibonacci sequence. Formally, 
\begin{equation}\label{klParikh}
   \begin{pmatrix} k \\ \ell \end{pmatrix} \ \text{ is the Parikh vector of a factor in $\ff$}.
\end{equation}  

We plan to compute $E^*(\vv_\delta)$ by the formula from  Theorem \ref{thm:FormulaForCR}. Hence we are interested in bispecial factors 
and their return words in $\vv_\delta$. More precisely, we are interested only in long bispecial factors.    
The fact that both sequences $\yy_\delta$ and $\hat{\yy}_\delta$ are periodic with the same period $H$ implies some evident properties.

\begin{observation}\label{obs:H} Let $v$ be a return word in $\vv_{\delta}$ to a non-empty factor $w$ such that $\pi(w)$ satisfies both $|\pi(w)|_{\tt a}\geq H$ and $|\pi(w)|_{\tt b}\geq H$ (we call such factor $w$ sufficiently long).
\begin{enumerate}
\item $|\pi(v)|_{\tt a}$ and $|\pi(v)|_{\tt b}$ are divisible by  $H$.
\item If $w$ is a bispecial factor of $\vv_\delta$, then   $\pi(w)$ is a bispecial factor of $\ff$.  
 \end{enumerate}
\end{observation}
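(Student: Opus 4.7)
\medskip
\textbf{Plan.} Both claims are instances of a single pigeonhole: I would first establish that $\yy_\delta$ (and hence $\hat{\yy}_\delta$) has \emph{minimal} period exactly $H = 2^{\delta-1}$, not merely that $H$ is a period. This is essential, since only minimal periodicity guarantees that any window of length $\geq H$ of $\yy_\delta$ starting at some index $p$ uniquely determines $p\bmod H$. The verification is a routine induction on $\delta$: in $\yy_{\delta+1}$, the letter $\delta{+}1$ occupies exactly the odd-indexed positions while $\yy_\delta$ (in order) occupies the even ones, so any period of $\yy_{\delta+1}$ must be even, and halving it must give a period of $\yy_\delta$; the inductive hypothesis then forces the period to be at least $2\cdot 2^{\delta-1}=2^\delta$.

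For part~1, I would fix two consecutive occurrences $i<j$ of $w$ in $\vv_\delta$, so $v=(\vv_\delta)_i\cdots(\vv_\delta)_{j-1}$. Let $p_i$ (resp.\ $p_j$) denote the number of ${\tt a}$'s strictly before position $i$ (resp.\ $j$) in $\vv_\delta$; then $p_j-p_i=|\pi(v)|_{\tt a}$. The colours carried by the ${\tt a}$-positions inside $w$ at occurrence $i$ form the window $(\yy_\delta)_{p_i},\ldots,(\yy_\delta)_{p_i+|\pi(w)|_{\tt a}-1}$ of length $|\pi(w)|_{\tt a}\geq H$; the analogous window at occurrence $j$ starts at index $p_j$. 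Since $w$ is the same factor at both occurrences the two windows coincide letter by letter, and minimal periodicity forces $p_i\equiv p_j\pmod{H}$, i.e.\ $H\mid|\pi(v)|_{\tt a}$. The same argument run with $\hat{\yy}_\delta$ in place of $\yy_\delta$ yields $H\mid|\pi(v)|_{\tt b}$.

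For part~2, suppose $w$ is bispecial in $\vv_\delta$ and consider any right extension $wa\in\LL(\vv_\delta)$. If $a\in\{{\tt 1},\ldots,\delta\}$, then for any occurrence $i$ of $w$ we have $a=(\yy_\delta)_{p_i+|\pi(w)|_{\tt a}}$; since $p_i\bmod H$ is already pinned down by the colouring inside $w$ via the same windowing, the letter $a$ is determined by $w$ alone. Hence at most one right extension of $w$ comes from $\{{\tt 1},\ldots,\delta\}$ and, symmetrically, at most one from $\{\hat{\tt 1},\ldots,\hat{\delta}\}$. Two distinct right extensions of the bispecial factor $w$ must therefore lie in different colour classes, so $\pi(w){\tt a},\pi(w){\tt b}\in\LL(\ff)$, making $\pi(w)$ right-special in $\ff$. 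An identical argument applied from the left yields left-specialness, and so $\pi(w)$ is bispecial in $\ff$.

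The only delicate point is the minimal-period claim; without it the window argument would give $p_i\equiv p_j$ only modulo some proper divisor of $H$, which is insufficient to conclude divisibility by $H$ itself. Everything else is bookkeeping of indices in the colouring map, and the hypothesis $|\pi(w)|_{\tt a},|\pi(w)|_{\tt b}\geq H$ provides exactly the window length needed to trigger the pigeonhole.
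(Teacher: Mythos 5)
Your proof is correct. The paper itself offers no written proof of this observation -- it is presented as an ``evident'' consequence of the fact that $\yy_\delta$ and $\hat{\yy}_\delta$ are periodic with period $H$ -- and your argument is precisely the natural formalization of that remark: the at least $H$ unhatted (resp.\ hatted) letters inside $w$ synchronize the phase of $\yy_\delta$ (resp.\ $\hat{\yy}_\delta$) modulo $H$ at every occurrence of $w$, which gives both the divisibility in Item~1 and the ``at most one extension per colour class'' argument in Item~2. Your insistence that the \emph{minimal} period of $\yy_\delta$ is exactly $H$ (equivalently, that the letter ${\tt 1}$ occurs in $\yy_\delta$ precisely at the positions divisible by $H$, which gives an even more direct phase argument) is a point the paper leaves implicit, and it is indeed what yields divisibility by $H$ itself rather than by a proper divisor. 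The only cosmetic slip is the phrase ``for any occurrence $i$ of $w$ we have $a=(\yy_\delta)_{p_i+|\pi(w)|_{\tt a}}$'', which should be restricted to occurrences of $w$ that are followed by an unhatted letter; the reasoning that follows uses it in exactly that way, so nothing is affected.
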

Item 2 of Observation~\ref{obs:H} enables us to exploit Proposition~\ref{prop:returnWords} on bispecial factors and return words in Sturmian sequences. 

\begin{lemma}\label{lem:kappa_lambda} 
Let $w$ be a sufficiently long bispecial factor of $\vv_\delta$ with $\delta \geq 1$.
Then for every return word $v$ to $w$ in $\vv_\delta$,  there exist $N, \kappa, \lambda \in \mathbb N, \kappa+\lambda\geq 1$, such that 
\begin{enumerate}
    \item $|w|=F_{N+3}-2$;
    \item $|v|=H(\kappa F_{N+2}+\lambda F_{N+1})$;
    \item  $|\kappa-\tau \lambda|<\frac{\tau^2}{H}$. 
\end{enumerate}
\end{lemma}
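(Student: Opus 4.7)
The plan is to transfer everything to $\ff$ via the discolouration map $\pi$ and then apply the Fibonacci-specific data already established. Since $w$ is a sufficiently long bispecial factor of $\vv_\delta$, Observation~\ref{obs:H}(2) gives that $\pi(w)$ is a bispecial factor of $\ff$. Ordering all bispecial factors of $\ff$ by length, $\pi(w)=b_N$ for some $N\in\mathbb N$. By Proposition~\ref{prop:returnWords}, $\Psi(b_N)=\begin{pmatrix}F_{N+2}-1\\F_{N+1}-1\end{pmatrix}$, hence $|w|=|\pi(w)|=(F_{N+2}-1)+(F_{N+1}-1)=F_{N+3}-2$, which is item~1.

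For items 2 and 3, I would start from \eqref{kl}: there exist $k,\ell\in\mathbb N$ with $k+\ell\geq 1$ such that $\Psi(\pi(v))=k\Psi(r_N)+\ell\Psi(s_N)$. Using the Parikh vectors from Proposition~\ref{prop:returnWords}, the two coordinates of $\Psi(\pi(v))$ are $kF_{N+1}+\ell F_N$ and $kF_N+\ell F_{N-1}$. The crucial step is to promote $k$ and $\ell$ to multiples of $H$. By Observation~\ref{obs:H}(1), both coordinates above are divisible by $H$; the coefficient matrix $M=\begin{pmatrix}F_{N+1}&F_N\\F_N&F_{N-1}\end{pmatrix}$ has determinant $(-1)^N$ by Fibonacci property~1 of Section~\ref{sec:FibonacciNumbers}, so $M$ is unimodular. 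Reducing modulo $H$, the congruence $M(k,\ell)^{T}\equiv 0\pmod{H}$ has only the trivial solution, forcing $H\mid k$ and $H\mid \ell$. Setting $k=H\kappa$ and $\ell=H\lambda$ (with $\kappa+\lambda\geq 1$), I get $|v|=|\pi(v)|=H\kappa(F_{N+1}+F_N)+H\lambda(F_N+F_{N-1})=H(\kappa F_{N+2}+\lambda F_{N+1})$, which is item~2.

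For item 3, \eqref{klParikh} says that $\begin{pmatrix}k\\\ell\end{pmatrix}=\begin{pmatrix}H\kappa\\H\lambda\end{pmatrix}$ is the Parikh vector of a factor of $\ff$. Lemma~\ref{lem_kl} then yields $|H\kappa-\tau H\lambda|<\tau^2$, and dividing by $H$ gives $|\kappa-\tau\lambda|<\tau^2/H$.

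The main obstacle I anticipate is the divisibility step in the middle paragraph: concluding that $k$ and $\ell$ are \emph{individually} divisible by $H$, rather than merely some linear combination of them. This rests squarely on the unimodularity of $M$, which in turn is Fibonacci property~1. Without it, the clean factor $H$ appearing in item 2 would not emerge, and the upper bound in item 3 would scale incorrectly. The remainder of the argument is bookkeeping with the Parikh-vector formulas from Proposition~\ref{prop:returnWords} and the characterization in Lemma~\ref{lem_kl}.
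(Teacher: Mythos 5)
Your proposal is correct and follows essentially the same route as the paper: discolour via $\pi$, invoke Observation~\ref{obs:H} and Proposition~\ref{prop:returnWords}, use unimodularity of the Parikh-vector matrix to force $H\mid k$ and $H\mid\ell$, and finish with \eqref{klParikh} and Lemma~\ref{lem_kl}. Your mod-$H$ phrasing of the divisibility step is equivalent to the paper's argument via the integrality of the inverse matrix, so there is nothing to add.
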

\begin{proof} By Item 2 of Observation \ref{obs:H}, $\pi(w)$ is bispecial in $\ff$, i.e., there exists $N$ such that  $\pi(w) = b_N=$ the $N^{th}$ bispecial factor of $\ff$.  As $|w| = |\pi(w)|$, 
Item 1 follows immediately from Proposition~\ref{prop:returnWords}.

We  show that $k, \ell$ from Equation \eqref{kl} are $H$-multiples of some integers.
For this purpose, we denote by  $A$  the  matrix from $ \mathbb{N}^{2\times 2}$ such that $ \Psi(r_N)$ and $\Psi(s_N)$ are the first and the second column of $A$, respectively. That is $A=\left(\begin{smallmatrix} F_{N+2} & F_{N+1}\\
F_{N+1} & F_{N}\end{smallmatrix}\right)$. Then by \eqref{kl} $\Psi(\pi(v))= A\left(\begin{smallmatrix} k \\ \ell \end{smallmatrix}\right)$. By Item 1 of Observation \ref{obs:H} 
$A\left(\begin{smallmatrix} k \\ \ell \end{smallmatrix}\right) =  H\left(\begin{smallmatrix} x \\ y \end{smallmatrix}\right)$
for some integers $x,y $. Since $A$ is unimodular, all entries of $A^{-1}$ are integers.  Hence $\left(\begin{smallmatrix} k \\ \ell \end{smallmatrix}\right) =  H A^{-1}\left(\begin{smallmatrix} x \\ y \end{smallmatrix}\right)\in H\mathbb{Z}^2$.   Therefore, $k=H\kappa$ and $\ell = H\lambda$ for some $\kappa, \lambda \in \N$. Moreover,   $\kappa+\lambda \geq 1 $, as $k$ and $\ell$ from \eqref{kl} satisfy $k +\ell \geq 1$.   Equality \eqref{kl} implies $|v| = k| r_N| + \ell |s_N|$. Substituting  $k=H\kappa$ and $\ell = H\lambda$ into this equality and using Proposition~\ref{prop:returnWords} we get Item 2 of the statement. By \eqref{klParikh}, $H\left(\begin{smallmatrix} \kappa \\ \lambda \end{smallmatrix}\right)$ is the Parikh vector of a factor of $\ff$. Lemma \ref{lem_kl} implies the inequality in Item 3.

\end{proof}

\section{An upper bound on asymptotic repetitive threshold of balanced sequences}
We know already the length of bispecial factors of $\vv_\delta$ and the form of their return words. In the sequel, we will estimate from above $E^*(\vv_\delta)$ applying Theorem~\ref{thm:FormulaForCR} and we will use this estimate to find an upper bound on the asymptotic repetitive threshold of balanced sequences. 
In order to estimate $E^*(\vv_\delta)$, the following lemma will play an essential role.

\begin{lemma}\label{lem:bounds}
Let $c>0$, $n \in \mathbb N, n \geq 1$, such that $$|F_{n+1}-\tau F_n| <c < |F_n -\tau F_{n-1}|\,.$$
If $|\kappa -\tau \lambda|<c$ for some $\kappa, \lambda \in \mathbb N, \kappa+\lambda\geq 1$, then
$\kappa \geq F_{n+1}$ and $\lambda \geq F_n$.
\end{lemma}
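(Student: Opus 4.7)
The plan is to prove both inequalities $\kappa \geq F_{n+1}$ and $\lambda \geq F_n$ simultaneously by induction on $n$. First I would rewrite the hypothesis in the cleaner form $1/\tau^n < c < 1/\tau^{n-1}$, which follows directly from property~3 in Section~\ref{sec:FibonacciNumbers}.

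For the base case $n = 1$, the hypothesis reduces to $1/\tau < c < 1$. If $\lambda = 0$, then $\kappa = |\kappa - \tau \lambda| < c < 1$ forces $\kappa = 0$, contradicting $\kappa + \lambda \geq 1$; hence $\lambda \geq 1 = F_1$. A symmetric argument shows that $\kappa = 0$ forces $\tau \lambda < 1$, i.e.\ $\lambda = 0$, again a contradiction, so $\kappa \geq 1 = F_2$.

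For the inductive step I would assume the statement for $n - 1 \geq 1$ and apply the transformation $(\kappa, \lambda) \mapsto (\kappa', \lambda') := (\lambda, \kappa - \lambda)$. A short computation using $\tau^2 = \tau + 1$ gives
\[
|\kappa' - \tau \lambda'| \;=\; |(1+\tau)\lambda - \tau \kappa| \;=\; \tau \, |\kappa - \tau \lambda| \;<\; \tau c,
\]
and $\tau c$ lies in the interval $(1/\tau^{n-1},\, 1/\tau^{n-2})$, which is precisely the hypothesis of the lemma at index $n - 1$. The inductive hypothesis then yields $\kappa' \geq F_n$ and $\lambda' \geq F_{n-1}$, i.e., $\lambda \geq F_n$ and $\kappa - \lambda \geq F_{n-1}$, from which $\kappa \geq F_n + F_{n-1} = F_{n+1}$.

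The main obstacle is justifying that the new pair lies in $\N^2$, i.e., that $\kappa \geq \lambda$. For $n \geq 2$ one has $c < 1/\tau < 1$, so the base-case argument again excludes $\lambda = 0$; then from $\kappa > \tau \lambda - c > \lambda - 1$ together with the integrality of $\kappa$ and $\lambda$ one concludes $\kappa \geq \lambda$. Finally, $\kappa' + \lambda' = \kappa \geq 1$ guarantees that the inductive hypothesis indeed applies, after which the rest is routine bookkeeping with the Fibonacci recursion.
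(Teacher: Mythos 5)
Your proof is correct, and it takes a genuinely different route from the paper. The paper argues directly: it sets $x=F_n\kappa-F_{n+1}\lambda$ and $y=F_{n-1}\kappa-F_n\lambda$, inverts these unimodular relations to write $(-1)^n\kappa=yF_{n+1}-xF_n$ and $(-1)^n\lambda=yF_n-xF_{n-1}$, and then splits into four cases according to the vanishing and signs of $x$ and $y$, using the alternating signs of $F_{k+1}-\tau F_k$ (property 5) to rule out the case $xy>0$ and coprimality (property 2) in the degenerate cases. You instead run an induction on $n$, rewriting the hypothesis as $1/\tau^n<c<1/\tau^{n-1}$ via property 3 and descending through the map $(\kappa,\lambda)\mapsto(\lambda,\kappa-\lambda)$, which scales $|\kappa-\tau\lambda|$ by exactly $\tau$ because $1+\tau=\tau^2$; your justification that the new pair stays in $\N^2$ (excluding $\lambda=0$ from $c<1$, then $\kappa>\tau\lambda-c>\lambda-1$ giving $\kappa\geq\lambda$) is the one nontrivial point and it is handled correctly, as is the verification that $\tau c$ lands in the interval required at index $n-1$. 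The trade-off: the paper's one-shot case analysis needs the explicit identities (1), (2), (5), (6) but yields slightly more information along the way (e.g.\ $\kappa\geq F_{n+2}$ in the case $xy<0$), while your descent is more elementary and self-contained, using essentially only $\tau^2=\tau+1$ and property 3, and makes transparent the self-similar (continued-fraction) mechanism behind the bound.
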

\begin{proof} Let us denote  $x:=F_{n}\kappa-F_{n+1}\lambda$  and $y:=F_{n-1}\kappa-F_{n} \lambda$. 
Using the first property of Fibonacci numbers from Section~\ref{sec:FibonacciNumbers}, we have
\begin{equation}\label{eq:star}
    \begin{array}{rcl}
  (-1)^n\kappa&=&(F_{n-1}\kappa-F_n\lambda)F_{n+1}+(-F_{n}\kappa+F_{n+1}\lambda)F_n = yF_{n+1}-xF_{n};  \\
  (-1)^n\lambda&=&(F_{n-1}\kappa-F_n\lambda)F_{n}+(-F_{n}\kappa+F_{n+1}\lambda)F_{n-1}= yF_{n}-xF_{n-1}\,.
    \end{array}
\end{equation}
Let us discuss several cases:
\begin{itemize}
    \item Let  $x=F_{n}\kappa-F_{n+1} \lambda=0$. Then $\frac{F_{n+1}}{F_n}=\frac{\kappa}{\lambda}$. Since $F_{n+1}$ and $F_n$ are coprime, $\kappa=m F_{n+1}$ and $\lambda =m F_n$ for some $m \in \mathbb N, m\geq 1$, thus the statement holds.

    \item 
Let  $y=F_{n-1}\kappa-F_{n} \lambda=0$. Then by the same reasoning we have $\kappa=m F_n$ and $\lambda=m F_{n-1}$ for some $m\in \mathbb N, m\geq 1$. If $m \geq 2$, then $\kappa \geq 2F_n \geq F_n+F_{n-1}=F_{n+1}$, and similarly, $\lambda \geq F_n$. If $m=1$, then $\kappa=F_n$, $\lambda=F_{n-1}$ and $|\kappa-\tau \lambda|>c$, i.e., the assumption of the lemma is not satisfied. 

\item Let  $x\cdot y < 0$. Then \eqref{eq:star} gives $\kappa =  |y|F_{n+1} + |x| F_n \geq F_{n+2}$ and $\lambda = |y|F_{n} + |x| F_{n-1} \geq F_{n+1} $.  

\item Let  $x\cdot y > 0$. Multiply the second equation in~\eqref{eq:star} by the number $-\tau$ and add it to the first equation to obtain 
$$|\kappa-\tau \lambda|=|y(F_{n+1}-\tau F_n)-x(F_{n}-\tau F_{n-1})|.$$
By   the fifth property of Fibonacci numbers from Section~\ref{sec:FibonacciNumbers},  $F_{n+1}-\tau F_n$ and   $F_{n}-\tau F_{n-1}$ have opposite signs. The condition  $x\cdot y > 0$ forces  
$$|\kappa-\tau \lambda|=|y|\cdot|F_{n+1}-\tau F_n|+|x|\cdot|F_{n}-\tau F_{n-1}| \geq |F_{n}-\tau F_{n-1}|  > c\,.$$
Therefore in this case the assumption of the lemma is not satisfied. 
\end{itemize}

\end{proof}

\begin{theorem}\label{thm:v_d}
Consider the balanced sequence $\vv_\delta$ with  $\delta \in \mathbb N, \delta\geq 1$.  Denote $H=2^{\delta-1}$. Find  $N_0\in \mathbb Z$ such that $\tau^{N_0+1}\leq H<\tau^{N_0+2}$.
Then $$E^*(\vv_\delta)\leq 1+\frac{1}{H \tau^{N_0-1}}.$$
\end{theorem}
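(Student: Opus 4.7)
The plan is to apply Theorem~\ref{thm:FormulaForCR} to $\vv_\delta$ by deriving a uniform lower bound on $|v_n|$ in terms of $|w_n|$ for all sufficiently long bispecial factors $w_n$; only such tails affect the $\limsup$, so the finitely many short bispecials can be ignored.

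First, I would use Lemma~\ref{lem:kappa_lambda} to parametrise: for some $N$, one has $|w_n| = F_{N+3} - 2$, and every return word $v$ to $w_n$ satisfies $|v| = H(\kappa F_{N+2} + \lambda F_{N+1})$ for some $\kappa,\lambda \in \N$ with $\kappa+\lambda\geq 1$ and $|\kappa - \tau\lambda| < \tau^2/H$. The shortest return word $v_n$ therefore corresponds to the minimum of $\kappa F_{N+2} + \lambda F_{N+1}$ over admissible pairs $(\kappa,\lambda)$.

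Next, I would translate the hypothesis $\tau^{N_0+1} \leq H < \tau^{N_0+2}$, together with property~3 of the Fibonacci numbers ($|F_{n+1} - \tau F_n| = \tau^{-n}$), into the sandwich
$$|F_{N_0+1} - \tau F_{N_0}| \;<\; \tfrac{\tau^2}{H} \;\leq\; |F_{N_0} - \tau F_{N_0-1}|,$$
and then apply Lemma~\ref{lem:bounds} with $n = N_0$ and $c = \tau^2/H$ to conclude $\kappa \geq F_{N_0+1}$ and $\lambda \geq F_{N_0}$ for every admissible pair. Combined with property~6, this yields
$$|v_n| \;\geq\; H\bigl(F_{N_0+1}F_{N+2} + F_{N_0}F_{N+1}\bigr) \;=\; H\cdot F_{N_0+N+2}.$$

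The last step is asymptotic: dividing $|w_n|/|v_n|$ and letting $N \to \infty$, property~4 gives the limit $\tau^{1-N_0}/H = 1/(H\tau^{N_0-1})$, so Theorem~\ref{thm:FormulaForCR} delivers the claimed upper bound on $E^*(\vv_\delta)$. The only nontrivial point I anticipate is verifying that the right-hand inequality in the sandwich is \emph{strict}, as required by Lemma~\ref{lem:bounds}; this follows because $H$ is an integer power of two while $\tau^{N_0+1}$ is irrational for $N_0\geq 0$. The edge case $\delta=1$, where $H=1$ and $N_0=-1$, can be treated separately by noting that $\vv_1$ is essentially $\ff$ and the claimed bound reduces to $1+\tau^2 = 2+\tau = E^*(\ff)$.
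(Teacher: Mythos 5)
Your argument for the main range is exactly the paper's: parametrise the shortest return word via Lemma~\ref{lem:kappa_lambda}, sandwich $c=\tau^2/H$ between $|F_{N_0+1}-\tau F_{N_0}|=\tau^{-N_0}$ and $|F_{N_0}-\tau F_{N_0-1}|=\tau^{-(N_0-1)}$, apply Lemma~\ref{lem:bounds} with $n=N_0$ to get $\kappa\geq F_{N_0+1}$, $\lambda\geq F_{N_0}$, then use properties~6 and~4 to pass to the limit in Theorem~\ref{thm:FormulaForCR}. Your observation about strictness of the right-hand inequality (an integer power of two cannot equal the irrational $\tau^{N_0+1}$) is correct and is indeed the reason the paper can write a strict inequality there.

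The gap is the case $\delta=2$, i.e.\ $H=2$ and $N_0=0$. You only set aside $\delta=1$ ($N_0=-1$), but for $N_0=0$ your key step also breaks down as written: Lemma~\ref{lem:bounds} is stated only for $n\geq 1$, and the right-hand side of your sandwich, $|F_{N_0}-\tau F_{N_0-1}|$, involves $F_{-1}$, which is not defined under the paper's convention $F_0=0$, $F_1=1$. So "apply Lemma~\ref{lem:bounds} with $n=N_0$" is not licensed when $\delta=2$. The omission is easy to repair: either extend the lemma to $n=0$ by setting $F_{-1}=1$ (its proof goes through, and the conclusion $\kappa\geq 1$, $\lambda\geq 0$ can even be checked directly, since $\kappa=0$, $\lambda\geq 1$ would force $|\kappa-\tau\lambda|\geq\tau>\tau^2/2$), which then yields $|v|\geq HF_{N+2}$ and the bound $1+\tau/2$; or do what the paper does and dispose of $\delta=2$ separately by citing the known value $E(\vv_2)=1+\frac{\tau}{2}$ from the quaternary case, whose irrationality gives $E^*(\vv_2)=E(\vv_2)$. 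As it stands, though, your proof covers only $\delta=1$ and $\delta\geq 3$, so the case $\delta=2$ must be added.
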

\begin{proof}
If $\delta =1$, then $H =1$ and $N_0=-1$. The sequence $\vv_1$ is the Fibonacci sequence over the alphabet $\{\tt 1, \hat{1}\}$. The result of \citep{Mignosi1992} gives 
$$E^*(\vv_1)=2+\tau = 1+ \tau^2 = 1+ \frac{1}{H \tau^{N_0 -1}},$$ 
as we wanted to show. 

If $\delta =2$, then $H =2$ and $N_0=0$. The sequence $\vv_2$ is the sequence 
having the minimal critical exponent among all quaternary balanced sequences,  as shown in~\citep{RSV19}. The minimal value is  $E(\vv_2)=1+\frac{\tau}{2}$. Since the value $E(\vv_2)$  is irrational,  $$E^*(\vv_2)= E(\vv_2)=1+\frac{\tau}{2} = 1+ \frac{1}{H\tau^{N_0-1}}  .$$

Now we assume that $\delta\geq 3$.   Then $N_0\geq 1$. 
Let $w$ be a sufficiently long bispecial factor in $\vv_\delta$  and  $v$ its shortest return word.  By Observation \ref{obs:H},  $\pi(w) =b_N$ for some $N \in \mathbb N$.  
According to Lemma~\ref{lem:kappa_lambda}, there exist $\kappa, \lambda \in \mathbb N,  \kappa+\lambda\geq 1$, such that $$ |w| = F_{N+3} - 2, \qquad |v|=H(\kappa F_{n+2}+\lambda F_{n+1})\ \ \   \text{and} \ \ \ |\kappa-\tau \lambda|<\frac{\tau^2}{H}\,.$$
Using the third property of Fibonacci numbers from Section~\ref{sec:FibonacciNumbers} and the upper and lower bounds on $H$, we obtain for $N_0 \geq 1$
$$|F_{N_0+1}-\tau F_{N_0}|=\frac{1}{\tau^{N_0}}<\frac{\tau^2}{H} < \frac{1}{\tau^{N_0-1}}=|F_{N_0}-\tau F_{N_0-1}|.$$
Applying Lemma~\ref{lem:bounds} with $c =\frac{\tau^2}{H}$, we have $\kappa \geq F_{N_0+1}$ and $\lambda \geq F_{N_0}$.
Hence, 
$$\frac{|w|}{|v|}= \frac{F_{N+3}-2}{H(\kappa F_{N+2}+\lambda F_{N+1})}\leq  \frac{F_{N+3}-2}{H(F_{N_0+1} F_{N+2}+F_{N_0} F_{N+1})}\leq \frac{F_{N+3}}{HF_{N_0+N+2}} \to \frac{1}{H \tau^{N_0-1}}\,,$$
where we used the sixth and fourth property of Fibonacci numbers from Section~\ref{sec:FibonacciNumbers}.
Therefore by Theorem~\ref{thm:FormulaForCR}, we have for $N_0 \geq 1$
$$E^*(\vv_\delta) \leq 1+\frac{1}{H \tau^{N_0-1}}\,.$$
\end{proof}

\begin{theorem}\label{thm:upperbound}
Let $d\in \mathbb N, d>1$, $d$ even. Then $$1+\frac{1}{2^{d-2}} \leq RTB^*(d)< 1+\frac{\tau^3}{2^{d-2}} \,.$$
\end{theorem}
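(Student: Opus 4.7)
The lower bound is already known: the statement $RTB^*(d) \geq 1 + \frac{1}{2^{d-2}}$ is recorded in the introduction as a result from \citep{DOP2022}, so no new work is needed there. The entire content of the theorem is therefore the upper bound, and the plan is to obtain it as a direct corollary of Theorem~\ref{thm:v_d} applied to a suitably chosen colouring.

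Write $d=2\delta$ with $\delta \geq 1$. The colouring $\vv_\delta$ from Definition~\ref{def:colouring} uses the $\delta$ letters $\{\mathtt{1},\dots,\delta\}$ and the $\delta$ letters $\{\mathtt{\hat 1},\dots,\hat\delta\}$, so it is a balanced sequence over an alphabet of size $2\delta=d$. Hence $\vv_\delta$ is a legitimate candidate in the infimum defining $RTB^*(d)$, and we get the crude inequality $RTB^*(d) \leq E^*(\vv_\delta)$.

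Now I plug into Theorem~\ref{thm:v_d}: with $H=2^{\delta-1}$ and $N_0$ the integer satisfying $\tau^{N_0+1}\leq H<\tau^{N_0+2}$, the theorem yields
\begin{equation*}
E^*(\vv_\delta)\ \leq\ 1+\frac{1}{H\tau^{N_0-1}}.
\end{equation*}
The task is only to convert this upper bound, which implicitly involves $N_0$, into the explicit form $1+\tau^3/2^{d-2}$. From $H<\tau^{N_0+2}$ we get $\tau^{1-N_0}<\tau^3/H$, and so
\begin{equation*}
\frac{1}{H\tau^{N_0-1}}\ =\ \frac{\tau^{1-N_0}}{H}\ <\ \frac{\tau^3}{H^2}\ =\ \frac{\tau^3}{2^{2(\delta-1)}}\ =\ \frac{\tau^3}{2^{d-2}}.
\end{equation*}
Combining these two displayed inequalities gives $RTB^*(d)<1+\tau^3/2^{d-2}$, which is the upper bound asserted in the theorem.

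There is essentially no obstacle: the heavy lifting was done in Theorem~\ref{thm:v_d} and in Lemmas~\ref{lem:kappa_lambda} and~\ref{lem:bounds}. The only thing to be careful about is the simple algebraic manipulation that relates $H=2^{\delta-1}$ to $2^{d-2}$ via the identity $H^2=2^{d-2}$ when $d=2\delta$, and the observation that the defining inequality for $N_0$ gives a tight enough bound on $\tau^{-N_0}$ in terms of $H$. The case $\delta=1$ (i.e.\ $d=2$) is covered directly by the first paragraph of the proof of Theorem~\ref{thm:v_d}, so no separate treatment is needed.
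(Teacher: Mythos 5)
Your proposal is correct and follows essentially the same route as the paper: cite the lower bound from the cited reference, set $\delta=d/2$, apply Theorem~\ref{thm:v_d} to get $RTB^*(d)\leq E^*(\vv_\delta)\leq 1+\frac{1}{H\tau^{N_0-1}}$, and use the strict inequality $H<\tau^{N_0+2}$ together with $H^2=2^{d-2}$ to conclude. The algebraic conversion you spell out is exactly the one in the paper's proof.
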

\begin{proof}
The lower bound was provided in~\citep{DOP2022}. 
Denote $\delta=d/2$. By Theorem~\ref{thm:v_d} we have $RTB^*(d)\leq E^*(\vv_\delta)\leq 1+\frac{1}{H \tau^{N_0-1}}$. 
Moreover, using the upper bound on $H$, we get 
$$\frac{1}{H\tau^{N_0-1}}< \frac{\tau^3}{H^2}=\frac{\tau^3}{2^{d-2}}\,,$$
which proves the upper bound from the theorem.
\end{proof}
\begin{corollary}\label{coro:liche}
Let $d\in \mathbb N, d>1$, $d$ even. Then $$RTB^*(d+1) \leq RTB^*(d)<1+\frac{\tau^3}{2^{d-2}} \,.$$
\end{corollary}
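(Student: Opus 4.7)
The right inequality $RTB^*(d) < 1 + \frac{\tau^3}{2^{d-2}}$ is precisely the conclusion of Theorem~\ref{thm:upperbound}, so nothing new is required there. My plan is therefore to establish only the monotonicity $RTB^*(d+1) \leq RTB^*(d)$, after which the two inequalities can simply be concatenated.

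The key observation is that any balanced sequence $\uu$ over an alphabet $\A$ of size $d$ can be regarded as a balanced sequence over an enlarged alphabet $\A \cup \{\star\}$ of size $d+1$, where $\star$ is a new symbol that does not appear in $\uu$. The set of factors $\LL(\uu)$, and hence the asymptotic critical exponent $E^*(\uu)$, do not depend on which ambient alphabet we place $\uu$ in. The balance condition is trivially satisfied for $\star$ because $|u|_\star = 0$ for every factor $u$ of $\uu$, and it continues to hold for the letters of $\A$ since $\uu$ itself has not been altered. Hence the same $\uu$ is a $(d+1)$-ary balanced sequence with identical asymptotic critical exponent.

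Applying this observation and taking the infimum over all $d$-ary balanced sequences yields $RTB^*(d+1) \leq RTB^*(d)$. Combined with Theorem~\ref{thm:upperbound}, we obtain the displayed chain of inequalities. Since the reduction is essentially a one-line observation about how the infimum behaves when the alphabet is enlarged by an unused symbol, I anticipate no real obstacle; the entire substance of the corollary is carried by Theorem~\ref{thm:upperbound}, and the corollary merely extends its upper bound from the even value $d$ to the odd value $d+1$.
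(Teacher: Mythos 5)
The right-hand inequality is indeed just Theorem~\ref{thm:upperbound}; the issue is your monotonicity step. Your argument rests on reading ``balanced sequence over a $(d+1)$-ary alphabet'' as allowing a letter that never occurs: you keep the $d$-ary sequence $\vv$ unchanged and merely declare an unused symbol $\star$ to be part of its alphabet. Under that reading the inequality $RTB^*(d+1)\leq RTB^*(d)$ is built into the definition and the corollary would need no proof at all --- which is precisely the sign that this is not the intended meaning. In this line of work (and in the companion paper from which the values of $RTB^*(d)$ are quoted), the threshold for alphabet size $d$ ranges over balanced sequences whose alphabet has cardinality exactly $d$, i.e.\ all $d$ letters actually occur; the padded sequence is still a $d$-ary balanced sequence and contributes nothing to the infimum defining $RTB^*(d+1)$. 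The fact that the authors prove the monotonicity by an explicit construction, rather than by the one-line embedding you propose, reflects exactly this convention, so your proposal has a genuine gap: it never produces a sequence that is $(d+1)$-ary in the relevant sense.

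What is actually needed, and what the paper does, is a letter-splitting construction. Given a $d$-ary balanced sequence $\vv$, choose a letter and replace its successive occurrences alternately by two new letters, i.e.\ colour the occurrences of that letter by $(\mathtt{AB})^{\omega}$. The resulting sequence genuinely uses $d+1$ letters; it is again balanced (either by a direct count --- if the numbers of the chosen letter in two equal-length factors of $\vv$ differ by at most one, then the numbers of $\mathtt{A}$'s, being the corresponding $\lfloor\cdot/2\rfloor$ or $\lceil\cdot/2\rceil$, also differ by at most one --- or via Hubert's characterization, since splitting a letter of a constant gap sequence into two alternating letters yields again a constant gap sequence); and its asymptotic critical exponent is at most $E^*(\vv)$, because identifying $\mathtt{A}$ and $\mathtt{B}$ back into the original letter maps every factor of the new sequence to a factor of $\vv$ of the same length and maps every $e$-power to an $e$-power with the same period, so the maximal exponent among factors of each given length cannot increase. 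Taking the infimum over all $d$-ary balanced $\vv$ then gives $RTB^*(d+1)\leq RTB^*(d)$, and concatenating with Theorem~\ref{thm:upperbound} yields the corollary.
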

\begin{proof}
The upper bound on $RTB^*(d)$ was given in Theorem~\ref{thm:upperbound}. 
If we choose in any $d$-ary balanced sequence $\vv$ a letter and replace all its occurrences with the sequence $(\tt AB)^{\omega}$, then it is readily seen that we obtain a $(d+1)$-ary balanced sequence with the asymptotic critical exponent smaller than or equal to $E^*(\vv)$. Consequently, $RTB^*(d+1)\leq RTB^*(d)$.
\end{proof}

\section{Comments and open problems}
\begin{enumerate}
\item
  Theorem~\ref{thm:v_d} gives an upper bound on the asymptotic critical exponent of $\vv_\delta$.  In fact, we  are able to prove that $E^*(\vv_\delta)$ $=1+\frac{1}{H \tau^{N_0-1}}$ for $H$ and $N_0$ specified in the theorem. However, it demands some more theoretical background on balanced sequences and it was not necessary for our purposes, therefore we preferred to prove just the weaker version.
\item 
In Table~\ref{RTB*} taken from~\citep{DOP2022}, it is possible to see that the upper bound from Theorem~\ref{thm:v_d} is even optimal for $d=2,4,8$ and it is close to the value of $RTB^*(d)$ for every even $d \leq 10$.  We conjecture that the bound is attained  for infinitely many even $d$'s. We believe that it depends on  location of $H$ in the interval  $(\tau^{N_0+1}, \tau^{N_0+2})$ from   Theorem~\ref{thm:v_d}. 

\begin{table}[t]
\centering
\caption{The asymptotic repetitive threshold  $RTB^*(d)$ for alphabets of even size  $d\leq 10$ compared to the upper bound from Theorem~\ref{thm:v_d}. In the middle column we emphasize the cases where the equality holds. }\label{RTB*}
\setlength{\tabcolsep}{3.8pt}
\renewcommand{\arraystretch}{1.9}
\resizebox{0.9\textwidth}{!} {
\begin{tabular}{|r|l|c|l|}
\hline
$d$ &  $RTB^*(d)$&  & bound from Theorem~\ref{thm:v_d} 
\\
\hline
\hline
2  &$2 +\tau \doteq3.618034$ & $=$ & $2+\tau$ \\
\hline

4 & $1 + \frac{\tau}{2}\doteq 1.809017$ & $=$  & $1+\frac{\tau}{2}$ \\
\hline

6& $\frac{75+3\sqrt{65}}{80}\doteq 1.239835 $ & $<$  &
$1+\frac{1}{4}=1.25 $ \\

\hline
8& $1+\frac{1}{8\tau^2}\doteq 1.047746 $ & $=$ &$1+\frac{1}{8\tau^2}$  \\
\hline
10 & $\frac{364-21\sqrt{7}}{304}\doteq 1.0146027$  &  $<$ &
$1+\frac{1}{16\tau^3}\doteq 1.0147552$ \\
\hline
\end{tabular}
}
\end{table}

\item As a next step, it may be interesting to study the asymptotic repetitive threshold for $2$-balanced sequences. However, in contrast to balanced sequences, description of $2$-balanced sequences is missing. Sufficient conditions for 2-balancedness of Arnoux-Rauzy sequences were described by Berthé et al.~\citep{Cassaigne} and a class of 2-balanced $S$-adic sequences was provided by~Langeveld et al.~\citep{RoTh}. A question posed by Cassaigne at Journées Montoises in 2022 is whether the asymptotic repetitive threshold of $2$-balanced sequences is equal to one. 
\item In~\citep{CuMoRa2020}, the repetitive threshold was determined for a class of  binary sequences which are rich in palindromes.  To find the repetitive threshold and the asymptotic repetitive threshold for $d$-ary rich sequences seems to be a hard task, as again, description of rich sequences (even over binary alphabet) is not known.   

\end{enumerate}

\section*{Acknowledgements}

The authors  acknowledge financial support of M\v SMT by  founded project   {CZ.02.1.01/0.0/0.0/16\_019/0000778}.


\end{document}